\newcommand{\alphabet}{E}
\DeclareMathOperator{\DS}{DimSpec}
\begin{document}
\title{On the dimension spectra of infinite conformal iterated function systems}

\authortushar
\authordavid

\subjclass[2010]{Primary 37D35, 28A80, 37B10}
\keywords{Hausdorff dimension, Iterated Function System, Conformal Dynamics, IFS, CIFS, Dimension Spectrum, Texan Conjecture, Thermodynamic Formalism, Transfer Operator}

\begin{abstract}
The dimension spectrum of a conformal iterated function system (CIFS) is the set of all Hausdorff dimensions of its various subsystem limit sets. This brief note provides two constructions -- (i) a compact perfect set that cannot be realized as the dimension spectrum of a CIFS; and (ii) a similarity IFS whose dimension spectrum has zero Hausdorff dimension, and thus is not uniformly perfect -- which resolve questions posed by Chousionis, Leykekhman, and Urba\'nski, and go on provoke fresh conjectures and questions regarding the topological and metric properties of IFS dimension spectra.
\end{abstract}
\maketitle

\section{Introduction}

The study of iterated function systems (IFSes), which began in earnest in the early 1980s, increased in popularity during the renaissance following Benoit Mandelbrot's seminal work \emph{Les objets fractals} \cite{Mandelbrot_book_1975} and his invention of the word \emph{fractal} to describe ``a mathematical set or concrete object whose form is extremely irregular and/or fragmented at all scales''. 
Leading researchers who worked on IFS theory and developed a variety of extensions include Bandt, Barnsley, Dekking, Falconer, Graf, Hata, Hutchinson, Mauldin, Schief, Simon, Solomyak, and Urba\'nski -- see \cite{BandtGraf, Barnsley, Dekking, Falconer_book1990, GMW, Hutchinson, MauldinUrbanski1, MauldinUrbanski2, Schief, SSU} for a small sample of their seminal research. For applications in engineering and science, e.g. in computer graphics, image processing, wavelets, probabilistic growth models, stochastic dynamical systems, etc. -- see \cite{BBDFK, DLLT, EPSE, Jorgensen2, KLMV, LL}.

Several pioneering works focussed on IFSes consisting of finitely many Euclidean similarities; afterwards the theory was extended to handle systems with infinitely many maps (called infinite IFSes) that were conformal. Mauldin and Urba\'nski were among the pioneers of this extension of IFS theory, first to the study of infinite conformal iterated function systems (CIFSes), and then to their generalizations, viz. conformal graph directed Markov systems (CGDMSes), see \cite{MauldinUrbanski1, MauldinUrbanski2}.
CIFS and CGDMS limit sets model several among the intensively studied fractals arising from either side of Sullivan's dictionary \cite{McMullen_classification, Sullivan_conformal_dynamical} (see also \cite[Table 1]{DSU_rigidity}): namely, certain Julia sets associated with holomorphic and meromorphic iteration, as well as certain Fuchsian and Kleinian limit sets associated with actions of discrete subgroups of isometries of hyperbolic (negatively curved) spaces. 

In particular, the CIFS framework is perfectly suited to encode various sets that appear naturally at the interface of dynamical systems, fractal geometry and Diophantine approximation. For instance, one can encode real numbers via their continued fraction expansions and this leads to the Gauss continued fraction IFS, which is a prime example of an infinite CIFS that comprises of the M\"obius maps $x \mapsto 1/(a + x)$ for $a \in \N$. Given any subset $A\subset \N$, let $\Lambda_A$ denote the set of all irrationals $x\in [0,1]$ whose continued fraction partial quotients all lie in $A$. Then $\Lambda_A$ may be expressed as the limit set of the subsystem of the Gauss IFS that comprises the maps $x \mapsto 1/(a + x)$ for $a \in A$, see \cite{Hensley_book}. The elements of $\Lambda_A$ when $A$ is a finite set are known as {\it bounded type} numbers. For instance, studying $E_{\{1,2\}}$ relates to the problem of finding rational numbers of given denominator having all partial quotients equal to $1$ or $2$. 
The Hausdorff dimensions of such sets have continued to be intensely investigated since several decades, see e.g. \cite{Bumby, MauldinUrbanski4, JenkinsonPollicott, Moreira, Moreira2} and the references therein.

It was conjectured independently by Hensley \cite{Hensley2} and by Mauldin--Urba\'nski \cite{MauldinUrbanski4} in the 1990s that the set $\{ \HD(\Lambda_A) : A \subset \N ~\text{with}~ |A| < \infty \}$ is a dense subset of $[0,1]$. This conjecture, dubbed the {\it Texan conjecture} by Jenkinson \cite{Jenkinson}, was resolved in the affirmative in 2006 by Kesseb\"ohmer--Zhu \cite{KessebohmerZhu}. Research surrounding the Texan conjecture gave birth to the study of topological and metric features of the dimension spectrum of an infinite CIFS. 

Understanding the geometry and topology of IFS and GDMS dimension spectra has since presented researchers with several challenges, see \cite{CLU,CLU2,GMR,Ghenciu,Ghenciu2,Jurga}. In their recent papers \cite{CLU,CLU2} Chousionis--Leykekhman--Urba\'nski (CLU) leverage the thermodynamic formalism to commence a careful study of the dimension spectra of finitely irreducible CGDMSes as well as for continued fractions with coefficients restricted to infinite subsets of natural numbers. In particular, they provide a positive answer to the analogue of the Texan conjecture for complex continued fractions \cite[Theorem 1.4]{CLU}. CLU proved that the dimension spectrum of every infinite CIFS satisfying the open set condition is compact and perfect, and conjectured that every such set may be realized as the dimension spectrum of a similarity IFS. They also asked whether there exists an IFS whose dimension spectrum is not uniformly perfect. This short note resolves both these questions posed by CLU in \cite{CLU} regarding the dimension spectrum of infinite CIFSes, and concludes with some fresh conjectures and research directions in this seam. 

\

\textbf{Conventions.} In this note $\N := \{ 1,2,3,\dots\}$. We write $x \asymp y$ to mean that $x$ and $y$ are multiplicatively comparable, i.e. there exists $C>0$ such that $1/C \leq x/y \leq C$. We use $\Theta(x)$ to denote any positive quantity multiplicatively comparable to $x$. We use $x \gtrgtr y$ to mean that for every $c>1$ we have that $x$ is eventually bigger than $cy$. We write $\HD(X)$ to denote the Hausdorff dimension of a set $X \subset \R^n$; and write $\rho(T)$ to denote the spectral radius of an operator $T$. We simplify notation by writing $\HD(A) := \HD(\Lambda_A)$ for the Hausdorff dimension of subsystem limit sets (see Definitions \ref{definitionCIFS} and \ref{subsystem}).

\

\textbf{Acknowledgements.} 
The authors discussed the results of this research with Vasileios Chousionis and Mariusz Urba\'nski at the American Institute of Mathematics (AIM) in March 2018, where they were collaborating via the \href{https://aimath.org/programs/squares/}{SQuaREs program}. We thank the AIM staff for nurturing this outstanding collective research opportunity and for providing us with excellent working conditions. We thank Bal\'azs B\'ar\'any for stimulating discussions. We also thank an anonymous referee for their valuable suggestions that helped clarify certain infelicities as well as improve the exposition of our proofs.  David Simmons is also supported by a 2018 Royal Society University Research Fellowship, URF\tbs R1\tbs180649.

\draftnewpage
\section{Definitions and statement of results}

The definition of a CIFS appears in several places in the literature, see e.g. \cite[Remark 3.2]{CLU}\footnote{The original may be found in \cite[\62, p.6-7]{MauldinUrbanski1}.}.

\begin{definition}[]
\label{definitionCIFS}
Fix $d\in\N$. A collection $\UU = (u_a)_{a\in \alphabet}$ of self-maps of $\R^d$ is a \textsf{\textbf{conformal iterated function system (CIFS)}} on $\R^d$ if:
\begin{enumerate}[1.]
\item $\alphabet$ is a countable (finite or infinite) index set, which is referred to as an \textsf{\textbf{alphabet}};
\item $X\subset\R^d$ is a nonempty compact set which is equal to the closure of its interior;
\item For all $a\in \alphabet$, $u_a(X) \subset X$;
\item $V\subset\R^d$ is an open connected bounded set such that $\mathrm{dist}(X,\R^d\butnot V) > 0$;
\item For each $a\in \alphabet$, $u_a$ is a conformal homeomorphism from $V$ to an open subset of $V$;
\item (\textsf{\textbf{Cone condition}})
$
\inf_{x\in X} \inf_{r\in (0,1)} \lambda(X\cap B(x,r))/r^d > 0,
$
where $\lambda$ denotes the Lebesgue measure on $\R^d$;

\item (\textsf{\textbf{Open set condition (OSC)}}) For all $a\in \alphabet$ the collection $(u_a(\Int(X)))_{a\in \alphabet}$ is disjoint;
\item (\textsf{\textbf{Uniform contraction}}) $\sup_{a\in \alphabet} \sup |u_a'| < 1$, and if $\alphabet$ is infinite, $\lim_{a\in \alphabet} \sup |u_a'| = 0$;
\item (\textsf{\textbf{Bounded distortion property}}) For all $n\in\N$, $\omega\in \alphabet^n$, and $x,y\in V$,
$|u_\omega'(x)| \asymp |u_\omega'(y)|$.
\end{enumerate}
\end{definition}

\begin{definition}
\label{definitionlimitset}
Given a countable alphabet $E$ as above, we denote by $E^n$ the set of all words of length $n$ formed using this alphabet, and by $E^*$ the set of all finite words formed using this alphabet. In other words,  
\[
E^* = \bigcup_{n=0}^{\infty} E^n .
\]
If $\omega\in E^*\cup E^\N$, i.e. $\omega$ is either a finite or infinite word formed using the alphabet $E$, then we denote subwords of $\omega$ by
\[
\omega_{n + 1}^{n + r} := (\omega_{n + i})_{i = 1}^r \in E^r.
\]
If $\omega\in E^n$ is a finite word then we define
\[
u_\omega(x) := u_{\omega_1}\circ\ldots\circ u_{\omega_n}(x).
\]
The \textsf{\textbf{coding map}} of the CIFS $\UU = (u_a)_{a\in \alphabet}$ is the map $\pi:\alphabet^\N\to X$ defined by the formula
\[
\pi(\omega) 
=\lim_{n\rightarrow\infty}u_{\omega_1^n}(x_0)
= \lim_{n\to\infty} u_{\omega_1}\cdots u_{\omega_n}(x_0),
\]
where $x_0\in X$ is an arbitrary point. By the Uniform Contraction hypothesis, $\pi(\omega)$ exists and is independent of the choice of $x_0$. The \textsf{\textbf{limit set}} of the CIFS is the image of $\alphabet^\N$ under the coding map, and will be denoted by $\Lambda = \Lambda_E :=\pi(E^\N)$.
\end{definition}

Note that the \textsf{\textbf{uniform contraction}} hypothesis implies that the coding map is always H\"older continuous, assuming that the metric on $\alphabet^\N$ is given by the formula
\[
\dist(\omega,\tau) = \lambda^{|\omega\wedge\tau|},
\]
where $\lambda\in (0,1)$ and $\omega\wedge\tau$ is the longest word which is an initial segment of both $\omega$ and $\tau$.

The class of CIFSes consisting of similarities has been studied particularly intensively. We give the definition below in the basic case when $d=1$.

\begin{definition}
\label{definitionIFS}
Let $E$ be a countable alphabet, as above. A  \textsf{\textbf{similarity iterated function system (SIFS)}} on $\R$ is a uniformly contracting and uniformly bounded collection of similarities $\UU = \UU_E :=(u_a)_{a\in E}$ indexed by $E$. We write each similarity $u_a:\R\to\R$ as 
$
u_a(x) = \lambda_a x + b_a
$
for $0< |\lambda_a| < 1 $ and $b_a\in \R$.
A collection $(u_a)_{a\in E}$ of similarities is uniformly contracting or uniformly bounded if
\[
\sup_{a \in E} |\lambda_a| <1 ~\text{or}~ \sup_{a \in E} |b_a| < \infty, 
\]
respectively. To guarantee that our SIFS $\UU$ is a CIFS as defined above, we assume that $\UU$ satisfies the \textsf{\textbf{open set condition (OSC)}}, i.e. there exists an open $W\subset\R$, whose closure satisfies the cone condition, such that the collection $(u_a(W))_{a\in E}$ is a disjoint collection of subsets of $W$. 
Note that the OSC assumption implies that the collection of similarities are uniformly bounded, i.e. that $\sup_{a \in E} |b_a| < \infty$, and also that $\lim_{a\in E} |\lambda_a| = 0$ (by taking the Lebesgue measure of the inclusion $\bigcup_{a\in E} u_a(W) \subset W$).
 
As above, the \textsf{\textbf{limit set}} of $\UU = \UU_E$ is the image of the \textsf{\textbf{coding map}} $\pi:E^\N\to\R$ defined by
\[
\pi(\omega) 
=\lim_{n\rightarrow\infty}u_{\omega_1^n}(x_0)
= \lim_{n\to\infty} u_{\omega_1}\cdots u_{\omega_n}(x_0),
\]
and will be denoted $\Lambda = \Lambda_E :=\pi(E^\N)$. Note that given any SIFS (not necessarily satisfying the OSC) the uniformly contracting and uniformly bounded condition implies that $\pi$ is defined. 
When we write $\UU$ is an SIFS, we assume as is common \cite[Remark 3.2]{CLU}, that the OSC is satisfied.
\end{definition}

\begin{definition}
\label{subsystem}
Given an SIFS or CIFS $\UU = \UU_E$ we will be interested in sub-CIFSes or sub-SIFSes (called \textsf{\textbf{subsystems}}) formed by restrictions of $\UU$ to various subsets of the original alphabet $E$. Given $A\subset E$, the corresponding subsystem, coding map, and limit set are denoted by $\UU_A$, $\pi_A$, and $\Lambda_A$, respectively. 
\end{definition}

\begin{definition}
The \textsf{\textbf{(Hausdorff) dimension spectrum}} of a CIFS $\UU = (u_a)_{a\in E}$ is defined as
\[
\DS(\UU) := \{ \HD(\Lambda_A) : A \subset E \}.
\]
\end{definition}

CLU proved \cite[Theorem 1.2]{CLU} that the dimension spectrum of an infinite CIFS is compact and perfect. 
They went on to conjecture \cite[Conjecture 1.3]{CLU} that every compact perfect set $K \subset [0, \infty)$ can be the dimension spectrum of a CIFS. Note that by taking a one-element subset of the alphabet, we get a subsystem whose limit set is a singleton and thus of Hausdorff dimension zero. Thus $0 \in \DS(\UU)$ for all iterated function systems $\UU$. Thus their original conjecture should be reformulated to only consider compact perfect sets containing zero. Our first result shows that their (reformulated) conjecture was too optimistic:

\begin{theorem}
\label{theoremcompactperfect}
There exists a compact and perfect set $K \subset [0, 1]$ such that $0 \in K$ and $\DS(\UU) \neq K$ for all CIFSes $\UU$ on $\R$.
\end{theorem}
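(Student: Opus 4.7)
The plan I would take exploits the fact that $\DS(\UU)$ is always the closure of the countable set of dimensions realized by \emph{finite} subsystems.

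\textbf{Step 1 (structural reduction).} For any CIFS $\UU$ with alphabet $E$, I would first observe that
\[
\DS(\UU) \;=\; \overline{\{\HD(\Lambda_A) : A \subset E,\ |A| < \infty\}}.
\]
The inclusion ``$\supseteq$'' is immediate from compactness of $\DS(\UU)$ (CLU's Theorem 1.2), while ``$\subseteq$'' follows from the identity $\HD(\Lambda_A) = \sup\{\HD(\Lambda_B) : B \subseteq A,\ |B| < \infty\}$, valid for every $A \subset E$. Hence $\DS(\UU)$ is completely determined by the countable set $F(\UU) := \{\HD(\Lambda_A) : A \subset E,\ |A| < \infty\}$ of finite-subsystem dimensions.

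\textbf{Step 2 (rigidity of finite-subsystem dimensions).} For any finite $A \subset E$ with $|A| \geq 2$, $\HD(\Lambda_A)$ is the unique zero of a Bowen-type pressure equation involving only the derivatives $|u_a'|$ for $a \in A$; for an SIFS this reduces to $\sum_{a \in A} r_a^s = 1$. The point to exploit is that $F(\UU)$ cannot be placed into $[0,1]$ arbitrarily: its elements satisfy rigid interlocking constraints dictated by the contraction-ratio data, and small perturbations of $A$ (adding or deleting a generator) produce quantitatively controlled shifts in $\HD(\Lambda_A)$.

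\textbf{Step 3 (construction of $K$).} With this rigidity in hand, I would try to construct $K$ with a structure near $0$ that no such $F(\UU)$ can approximate densely. A natural candidate is a ``ladder'' set
\[
K \;=\; \{0\} \;\cup\; \bigcup_{n \geq 1} I_n,
\]
where $(I_n)$ is a sequence of small closed intervals accumulating at $0$, with lengths and gap sizes tuned so that any attempt to populate each $I_n$ densely with finite-subsystem dimensions (as required by $\DS(\UU) \supseteq I_n$) necessarily forces additional finite-subsystem dimensions into the gaps between consecutive $I_n$ via the Bowen equation of Step 2. This would contradict $\DS(\UU) = K$.

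\textbf{Main obstacle.} The principal difficulty lies in Step 3: the argument must be robust to \emph{every} possible CIFS on $\R$, including the bounded-distortion slack inherent in the general conformal setting, and must handle wildly varying patterns of contraction ratios (including the pathological ones that, by the paper's second theorem, can make $\DS(\UU)$ have Hausdorff dimension zero). I expect the proof to culminate in a quantitative pigeonhole estimate showing that even under worst-case distortion and ratio choices, the Bowen constraint forces a dimension into one of the gaps of $K$, ruling out $\DS(\UU) = K$ uniformly in the CIFS data.
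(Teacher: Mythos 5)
Your Step 1 is fine, and your overall strategy---use the rigidity of finite-subsystem dimensions coming from Bowen's equation to build a compact perfect set that no CIFS can realize---is the same strategy the paper follows. But the proposal has a genuine gap: everything that makes the theorem true is deferred to Step 3, which you yourself flag as the ``principal difficulty'' and do not carry out. Concretely, two things are missing. First, you never identify the key quantitative estimate. The paper proves (via the Perron--Frobenius operator and the bounded distortion property) that for a finite $F$ with $\HD(\Lambda_F)=\delta>0$ and an extra symbol $b$, one has $\HD(\Lambda_{F\cup\{b\}})=\delta+\Theta(\|u_b'\|^{\delta})$. The essential point is the \emph{power law in $\delta$}: the same symbol $b$ produces increments of size $\asymp D_b^{\delta_1}$ and $\asymp D_b^{\delta_2}$ when attached to two base sets of different dimensions $\delta_1<\delta_2$, and these two scales are tied together by the exponent ratio $\delta_2/\delta_1>1$ as $D_b\to 0$. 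Your Step 2 only asserts ``quantitatively controlled shifts'' without extracting this exponent dependence, and without it there is no lever to derive a contradiction.

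Second, your candidate $K$ (a ladder of closed intervals accumulating at $0$) is not shown to defeat any CIFS, and it is far from clear that it can: nothing in your sketch rules out a CIFS whose finite-subsystem dimensions densely fill each $I_n$ while the increments $\Theta(D_b^{\delta})$ happen to respect the gaps, and indeed sets with nonempty interior accumulating at a point are not obviously excluded by any perturbation estimate (the paper's own Conjecture 3 even allows spectra of full local dimension). The paper avoids this by going in the opposite direction: it builds $K=f(A^{\N})$ as a totally disconnected, super-lacunary Cantor set in which every gap $f(\tau)-f(\omega)$ is comparable to some $4^{-n!}$. If $\DS(\UU)=K$, the two increments above force $4^{-n!}\asymp D^{\delta_1}$ and $4^{-m!}\asymp D^{\delta_2}$ simultaneously, i.e.\ $n!\approx (\delta_2/\delta_1)\,m!$ with $n,m\to\infty$, which the factorial lacunarity makes impossible whether $n>m$ or $m\geq n$. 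So while your intuition about ``forcing dimensions into gaps'' points in the right direction, the proposal as written neither constructs a workable $K$ nor supplies the estimate needed to obtain a contradiction.
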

\noindent The proof of Theorem \ref{theoremcompactperfect} shows that it remains true if ``$\R$'' is replaced by ``$\R^d$'' for any $d$.

CLU recognized that their conjecture had ``room for many partial results and open questions''. They asked, in particular, whether there exists an IFS whose dimension spectrum is not uniformly perfect. Our second result answers their question in the affirmative.

\begin{theorem}
\label{theoremHDzero}
There exists an infinite SIFS on $\R$ whose dimension spectrum has Hausdorff dimension zero. In particular, the dimension spectrum is not uniformly perfect.
\end{theorem}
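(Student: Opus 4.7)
The plan is to construct an explicit SIFS $\UU = (u_n)_{n \in \N}$ on $\R$ with contraction ratios $\lambda_n := 2^{-b_n}$, where $(b_n)_{n \geq 1}$ grows doubly-exponentially --- say $b_n := 2^{2^n}$ --- and with translations chosen so the OSC holds trivially (possible since $\sum_n \lambda_n$ is vanishingly small). Since $\UU$ is a similarity IFS satisfying the OSC, Bowen's formula gives that for every nonempty $A \subseteq \N$, $\HD(\Lambda_A)$ equals the unique $s_A \geq 0$ with $\sum_{n \in A} \lambda_n^{s_A} = 1$, with the usual convention $s_A := \sup\{s_F : F \subseteq A \text{ finite}\}$ for infinite $A$. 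The goal is to cover $\DS(\UU)$ by short intervals indexed by the truncation $\sigma = A \cap \{1, \ldots, N\}$ and conclude $\HD(\DS(\UU)) = 0$ directly; since every uniformly perfect subset of $\R$ has positive lower box-counting (hence Hausdorff) dimension, this automatically yields the ``in particular'' statement.

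For each $N \in \N$ and each $\sigma \subseteq \{1, \ldots, N\}$, set
\[
I_\sigma := [\,s_\sigma,\; s_{\sigma \cup \{N+1, N+2, \ldots\}}\,] \qquad (\text{with } s_\emptyset := 0).
\]
Monotonicity of $A \mapsto s_A$ under inclusion, applied to both finite and infinite $A$ via $s_A = \lim_{F \nearrow A} s_F$, ensures $\DS(\UU) \subseteq \bigcup_\sigma I_\sigma$, a cover by at most $2^N$ closed intervals. The technical heart of the proof is a uniform estimate $|I_\sigma| \leq \delta_N$ with $\delta_N$ super-small. For $\sigma = \emptyset$, $I_\sigma = [0, s^*_{N+1}]$ where $s^*_{N+1} = \HD(\Lambda_{\{N+1, N+2, \ldots\}})$, and solving the tail pressure equation asymptotically (the $n = N+1$ term dominates) gives $s^*_{N+1} = \Theta(\log(b_{N+2}/b_{N+1})/b_{N+2})$. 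For nonempty $\sigma$ with $m := \min \sigma$, parametrising by $u := \lambda_{N+1}^{\bar s_\sigma}$ reduces the pressure equation defining $\bar s_\sigma$ to a self-consistent equation of the form $u^{b_m/b_{N+1}} + \cdots + u \approx 1$; an elementary asymptotic analysis then yields $|I_\sigma| \leq \bar s_\sigma - s_\sigma = O(\log(b_{N+1}/b_m)/b_{N+1})$. The singleton $\sigma = \{1\}$ attains the worst bound, so for $b_n = 2^{2^n}$ we obtain $\delta_N = O(2^{N+1}/2^{2^{N+1}})$.

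For any fixed $s > 0$, the $s$-dimensional Hausdorff premeasure at scale $\delta_N$ satisfies
\[
\mathcal{H}^s_{\delta_N}(\DS(\UU)) \leq 2^N \delta_N^s \leq C^s \cdot 2^{N + s(N+1) - s \cdot 2^{N+1}},
\]
whose exponent diverges to $-\infty$ as $N \to \infty$ for every $s > 0$. Hence $\mathcal{H}^s(\DS(\UU)) = 0$ for all $s > 0$, and therefore $\HD(\DS(\UU)) = 0$. I expect the main obstacle to be establishing the interval-length estimate $|I_\sigma| \leq \delta_N$ uniformly in $\sigma$ --- particularly for singletons $\sigma = \{m\}$, where $s_\sigma = 0$ precludes a naive implicit-function argument and necessitates a direct asymptotic analysis (in the spirit of the Lambert-$W$ function) of the self-consistent equation $u^{1/K} + u = 1$ for $K = b_{N+1}/b_m$ to extract $u \sim \log K / K$. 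Calibrating the growth rate of $b_n$ precisely enough to overpower the $2^N$ entropy in the cover for every prescribed $s > 0$ is the other delicate point; doubly-exponential growth just barely suffices, whereas a merely exponential rate $b_n = 2^n$ would fail.
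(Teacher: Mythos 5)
Your proposal is correct in strategy and reaches the right conclusion, but it takes a genuinely different route from the paper. You cover the whole spectrum at once by the $2^N$ intervals $I_\sigma=[s_\sigma,s_{\sigma\cup\{N+1,\dots\}}]$ indexed by truncations $\sigma\subseteq\{1,\dots,N\}$, and therefore need a \emph{uniform} worst-case length bound that must survive the singletons $\sigma=\{m\}$, where $s_\sigma=0$ and the gap is only $\Theta(\log(b_{N+1}/b_m)/b_{N+1})$; beating the $2^N$ entropy for every $s>0$ then genuinely forces superexponential growth of $b_n$, and your doubly-exponential choice works (your asymptotics for the self-consistent equation $u^{r}+u=1$, $u\sim r\log(1/r)$, check out). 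The paper instead takes the much tamer ratios $|u_a'|=2^{-a^2}$ and sidesteps the degenerate small-dimension subsystems entirely: it fixes two letters $a_1,a_2$, restricts to subsets containing both, so that every dimension in that subfamily is bounded below by $s=\HD(\Lambda_{\{a_1,a_2\}})>0$; a mean-value-theorem computation on Bowen's equation then gives $|f(\omega)-f(\tau)|=O(2^{-s|\omega\wedge\tau|^2})$, hence box (so Hausdorff) dimension zero for each such piece, and countable stability of Hausdorff dimension over the countably many pairs $(a_1,a_2)$ finishes the proof. What each approach buys: yours is a single self-contained cover with no appeal to countable stability, at the cost of a fast-growing alphabet and a more delicate uniform estimate (for $|\sigma|\ge 2$ you still need to check that $s_\sigma\ge s_{\{N-1,N\}}$ makes the tail term $2^{-b_{N+1}s_\sigma}$ negligible, which it does, so the singletons really are the worst case as you claim); the paper's decomposition localizes the estimate where the dimension is bounded away from zero, which is why quadratic exponents suffice. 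One caveat on your closing remark: the failure of $b_n=2^n$ is an artifact of your covering scheme, not of the theorem --- with the paper's countable-stability decomposition even $b_n=n^2$ works.
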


\draftnewpage
\section{Proof of Theorem \ref{theoremcompactperfect}}
\label{section}

\begin{center}
{\it To simplify notation in the sequel, we will write  $\HD(A) := \HD(\Lambda_A)$ for all $A\subset E$.}
\end{center}

Let $A = \{0,1\}$, and let $(\sigma_n)_{n \geq 1}$ be an enumeration of $A^*$ such that the map $n \mapsto |\sigma_n|$ is nondecreasing.
Next, let $g(\sigma_n) := 4^{-n!}$, and let $f: A^\N \to [0,1]$ be defined by 
\[
f(\omega) := \sum_{n : \: \omega_{n+1}=1} g(\omega_1^n).
\] 
We define $K := f(A^\N)$. 
The map $f$ is continuous, and thus $K$ is compact.

The map $f$ is injective. Indeed, suppose  $\omega, \tau \in A^\N$ are distinct. Without loss of generality we may take them to be of the form $\omega = \omega_1^n 0 \omega_{n+2}^\infty$ and $\tau = \omega_1^n 1 \tau_{n+2}^\infty$. It follows from the properties of $(\sigma_n)_{n \geq 1}$ and $g$ that $g(\omega_1^{n+k}) \leq 4^{-k} g(\omega_1^n)$ for all $k$.
Therefore we have that
\[
f(\tau) - f(\omega)  
\geq
g(\omega_1^n) - \sum_{m>n} g(\omega_1^m)
\geq g(\omega_1^n) [1 - \sum_{k>0} 4^{-k}] > 0.
\]
Since $K$ is the continuous injective image of a perfect space, it itself is thus perfect. Note that this calculation shows that $f(\tau) - f(\omega) \asymp g(\omega \wedge \tau)$.

We now want to prove that $\DS(\UU) \neq K$ for all all CIFSes $\UU$ on $\R$. So let $\UU = (u_a)_{a\in E}$ be an infinite CIFS on $\R$ with alphabet $E$, and by way of contradiction suppose that $\DS(\UU) = K$. Next, for a given $F \subseteq E$ with $2 \leq \#(F) < \infty$, we estimate how much the Hausdorff dimension of $\HD(F)$ increases after we add an extra symbol $b \in E \setminus F$. 

\begin{claim}
\label{claim4points}
For every $F \subseteq E$ with $2 \leq \#(F) < \infty$ and for every $b \in E \setminus F$, we have
\[
\HD(F \cup\{b\}) = \delta + \Theta(D_b^{\delta}),
\] 
where $\delta = \HD(F)$ and $D_b = \|u_b'\| := \sup  | u_b' |$, and the implied constant of $\Theta$ depends on $F \subseteq E$.
\end{claim}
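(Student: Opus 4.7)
The plan is to leverage Bowen's formula and the thermodynamic formalism for finite CIFSes. For any finite $G\subset E$ with $\#(G)\geq 2$, let
\[
Z_n(G,t) := \sum_{\omega\in G^n}\|u_\omega'\|^t, \qquad P_G(t) := \lim_{n\to\infty}\frac{1}{n}\log Z_n(G,t)
\]
denote the topological pressure of the finite subsystem $\UU_G$. Bowen's formula (valid under OSC, the cone condition, and bounded distortion, see \cite{MauldinUrbanski1}) identifies $\delta_G := \HD(G)$ as the unique zero of $P_G$; moreover $P_G$ is real-analytic, strictly decreasing, and strictly convex near $\delta_G$, so $P_G'(\delta_G)<0$ depends only on $G$, and bounded distortion gives $Z_n(G,t)\asymp e^{nP_G(t)}$ with constants depending on $G$ and $t$.

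The key intermediate step is to show that $P_{F\cup\{b\}}(\delta)\asymp D_b^\delta$, with implied constants depending on $F$ but not on $b$. To do this, decompose every $\omega\in(F\cup\{b\})^n$ according to its occurrences of the letter $b$, writing $\omega=\omega^{(0)}b\,\omega^{(1)}b\cdots b\,\omega^{(k)}$ with each $\omega^{(i)}\in F^*$. Iterated application of the bounded distortion estimate $\|u_{\sigma\tau}'\|\asymp\|u_\sigma'\|\cdot\|u_\tau'\|$ yields $\|u_\omega'\|^t\asymp\widetilde D_b^{\,kt}\prod_i\|u_{\omega^{(i)}}'\|^t$, where $\widetilde D_b\asymp D_b$ (the accumulating distortion constants from the $k$-fold concatenation contribute a uniformly bounded factor per occurrence of $b$, which can be absorbed into an effective contraction ratio $\widetilde D_b$). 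Summing over $\omega$ and passing to generating functions gives
\[
\Psi(z,t) := \sum_{n\geq 0}Z_n(F\cup\{b\},t)\,z^n \asymp \frac{\Phi(z,t)}{1-\widetilde D_b^{\,t}z\,\Phi(z,t)}, \qquad \Phi(z,t) := \sum_{n\geq 0}Z_n(F,t)\,z^n.
\]
At $t=\delta$ we have $Z_n(F,\delta)\asymp 1$ and hence $\Phi(z,\delta)\asymp(1-z)^{-1}$ as $z\nearrow 1$. The radius of convergence $e^{-P_{F\cup\{b\}}(\delta)}$ of $\Psi(\cdot,\delta)$ is the smallest $z_*\in(0,1)$ satisfying $\widetilde D_b^{\,\delta}z_*\,\Phi(z_*,\delta)\asymp 1$, which forces $1-z_*\asymp D_b^\delta$. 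Taking logarithms yields $P_{F\cup\{b\}}(\delta)\asymp D_b^\delta$.

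To convert this pressure estimate into the claimed dimension estimate, apply the mean value theorem to $P_{F\cup\{b\}}$ between $\delta$ and $\delta_{F\cup\{b\}}$: there exists $\xi$ between them with
\[
-P_{F\cup\{b\}}(\delta) = P_{F\cup\{b\}}'(\xi)\bigl(\delta_{F\cup\{b\}}-\delta\bigr).
\]
A parallel argument obtained by differentiating the generating function identity in $t$ gives $P_{F\cup\{b\}}'(t)=P_F'(t)+O(D_b^t)$ on a fixed neighborhood of $\delta$. By the uniform contraction hypothesis only finitely many $b\in E$ have $D_b$ exceeding any fixed threshold, so for all but finitely many $b$ the quantity $|P_{F\cup\{b\}}'(\xi)|$ is pinched between positive constants depending only on $F$; the finitely many exceptional $b$ each contribute a bounded positive ratio $(\HD(F\cup\{b\})-\delta)/D_b^\delta$, which can be absorbed into the $F$-dependent implied constant. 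Solving then gives $\delta_{F\cup\{b\}}-\delta=\Theta(D_b^\delta)$, as required.

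The main obstacle is the careful bookkeeping of bounded-distortion constants through the $k$-fold factorization, in particular ensuring that the accumulating multiplicative errors can be absorbed without affecting the asymptotic in $D_b$. The upper bound $\HD(F\cup\{b\})-\delta=O(D_b^\delta)$ alone can alternatively be obtained from a direct covering argument using the natural cover of $\Lambda_{F\cup\{b\}}$ by images of subsystem words; the matching lower bound, however, is the substantive content of the claim and genuinely requires the pressure machinery above, which quantifies precisely how much Hausdorff dimension the single additional symbol $b$ contributes.
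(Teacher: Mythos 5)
Your proposal arrives at the same two-step skeleton as the paper --- first establish the pressure estimate $P(F\cup\{b\},\delta)\asymp D_b^{\delta}$ with constants depending only on $F$, then convert it into a dimension increment via Bowen's formula and a two-sided bound on the derivative of pressure --- but your route to the key pressure estimate is genuinely different. The paper works spectrally: it takes the positive eigenfunction $g$ of the Perron--Frobenius operator $L$ of $\UU_F$ at the parameter $\delta$ (so $Lg=g$ since $P(F,\delta)=0$), observes via bounded distortion that the perturbed operator $L'$ of $\UU_{F\cup\{b\}}$ satisfies $L'g=(1+\Theta(D_b^{\delta}))g$, and reads off $\rho(L')=1+\Theta(D_b^{\delta})$ from positivity; this takes three lines and needs no combinatorics. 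Your renewal-type decomposition of words by occurrences of $b$, followed by the generating-function singularity analysis, is a valid elementary substitute: the distortion constants accumulate only as $C^{O(k)}$ over $k$ occurrences of $b$ and so really can be absorbed into an effective ratio $\widetilde D_b\asymp D_b$, and the upper and lower bounding rational functions pin the singularity at $1-z_*\asymp D_b^{\delta}$, at the cost of noticeably more bookkeeping. The one step you should repair is the derivative bound: you cannot literally ``differentiate the generating function identity in $t$,'' because that identity holds only up to multiplicative constants and an $\asymp$ relation carries no information about derivatives. What you actually need is only that $-P'(F\cup\{b\},\cdot)\asymp 1$ on $[\delta,\HD(F\cup\{b\})]$ uniformly in $b$; the lower bound follows from uniform contraction (every weighted average of $\tfrac1n\log\|u_\omega'\|$ is at most $\log\sup_{a\in E}\sup|u_a'|+O(1/n)<0$), and the upper bound follows from convexity of pressure together with $P(F\cup\{b\},0)=\log(\#(F)+1)$, $P(F\cup\{b\},\delta)\ge 0$, and $\delta>0$, exactly as in the paper. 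With that substitution your mean value theorem step closes, and the finitely many $b$ with $D_b$ above a fixed threshold are legitimately absorbed into the $F$-dependent implied constant as you indicate.
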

\begin{proof}
Recall from Definition \ref{definitionCIFS} of a CIFS that $X\subset\R^d$ is a nonempty compact set which is equal to the closure of its interior. Let $C(X)$ denote the Banach space of continuous functions from $X$ to $(0,\infty)$, and let $L : C(X) \to C(X)$ denote the Perron--Frobenius operator of the CIFS $(u_a)_{a\in F}$, i.e.
\[
L f(x) := \sum_{a\in F} |u_a'(x)|^{\delta} f\circ u_a(x).
\]
Then there exists a positive continuous map $g : X \to (0,\infty)$ such that $L g = g$, by \cite[Theorem 6.1.2]{MauldinUrbanski2}.
Now let
\[
L' f(x) := L f(x) + |u_b'(x)|^{\delta} f\circ u_b(x).
\]
Then the logarithm of the spectral radius of $L'$ is $\log\rho(L') = P(F\cup\{b\},\delta)$, the pressure of the CIFS $(u_a)_{a\in F\cup \{b\}}$ evaluated at $\delta$, see \cite[Theorem 2.4.3, Theorem 2.4.6, and p.29]{MauldinUrbanski2}. To estimate this, we compute $L' g$. Now by the bounded distortion property and since $g$ is bounded from above and below on the compact set $X$, we have
\[
(L' - L) g (x) = |u_b'(x)|^{\delta} g\circ u_b(x) \asymp D_b^{\delta} g(x)
\]
and thus
\[
L' g = (1 + \Theta(D_b^{\delta})) g.
\]
Since $g$ and $L'$ are both positive, this tells us that the spectral radius satisfies
\[
\exp( P(F\cup\{b\},\delta) ) = \rho(L') = 1 + \Theta(D_b^{\delta}).
\]
Thus we have that 
\[
P(F\cup\{b\},\delta) \asymp D_b^{\delta}.
\]

On the other hand, from Bowen's formula we know that $P(F\cup\{b\},s) = 0$ where $s = \HD(F\cup\{b\})$, see \cite[Theorem 4.2.11]{MauldinUrbanski2}. Moreover, the negative derivative of pressure satisfies 
\begin{equation}
\label{negativepressurebounds}
-P'(F\cup \{b\},\cdot) \asymp 1.
\end{equation}
on $\CO{\delta}\infty$ independent of $b$. Indeed, the lower bound in \eqref{negativepressurebounds} follows from direct calculation, while the upper bound in \eqref{negativepressurebounds} follows from the convexity of pressure \cite[Proposition 4.2.8(b)]{MauldinUrbanski2} together with the fact that $P(F\cup\{b\},0) = \log(\#(F) + 1)$ is independent of $b$, and that $\delta > 0$ since $\#(F) \geq 2$. It thus follows that $s = \delta + \Theta(D_b^{\delta})$. This concludes the proof of Claim \ref{claim4points}.
\end{proof}

Next, we consider $F_1,F_2 \subset E$ such that $2 \leq |F_i| < \infty$, $\delta_i := \HD(F_i) > 0$, and $\delta_2 > \delta_1$; for instance, we could take $F_1 = \{a_1,a_2\}$ and $F_2 = \{a_1,a_2,a_3\}$ where $(a_n)_{n \geq 1}$ is an enumeration of $E$.

Now fix some $b \in E\setminus(F_1 \cup F_2)$. Since we assumed $\DS(U) = K = f(A^\N)$, there exist $\omega,\omega',\tau,\tau' \in A^\N$ such that 
\begin{align*}
f(\omega) &= \HD(F_1) = \delta_1&
f(\omega') &= \HD(F_1\cup\{b\})\\
f(\tau) &= \HD(F_2) = \delta_2&
f(\tau') &= \HD(F_2\cup\{b\}).
\end{align*}
By two applications of Claim \ref{claim4points} we have
\[
f(\omega') - f(\omega) \asymp D^{f(\omega)}
~~~\text{and}~~~
f(\tau') - f(\tau) \asymp D^{f(\tau)},
\]
where $D = D_b = \|u_b'\|$.
Now let $\omega'' := \omega\wedge\omega'$ and $\tau'' := \tau\wedge\tau'$. Then 
\begin{align*}
g(\omega'') &\asymp f(\omega') - f(\omega) \asymp D^{\delta_1},\\
g(\tau'') &\asymp f(\tau') - f(\tau) \asymp D^{\delta_2}
\end{align*}
and thus
\[
g(\tau'') \asymp g(\omega'')^{\delta_2/\delta_1}.
\]
Rewriting this using the definition of $g$, we see that
\[
4^{n!} \asymp 4^{s m!}
\] 
where $s = \delta_2/\delta_1$, $\sigma_n = \omega''$, and $\sigma_m = \tau''$.

Note that since $\lim_{b \in E} \| u_b' \| = 0$, Claim \ref{claim4points} implies that $\lim_{b \in E} f(\omega') = f (\omega)$, or equivalently that $\lim_{b \in E} |\omega'' | = \infty$. Thus $n$ and $m$ will both become arbitrarily large as $b$ ranges over $E$. If $n > m$ for infinitely many $b \in E$, then $4^{n!} \geq 4^{n m!} \gtrgtr 4^{s m!}$, a contradiction. Similarly, if $m \geq n$ for infinitely many $b \in E$, then since $s = \delta_2/\delta_1 >1$, we have $4^{m!} \geq 4^{n!} \gtrgtr 4^{s^{-1} n!}$, another contradiction. This concludes the proof of Theorem \ref{theoremcompactperfect}.

\draftnewpage
\section{Proof of Theorem \ref{theoremHDzero}}

Let $\UU = (u_a)_{a\in E}$ be a collection of similarities satisfying the OSC such that for all $a\in E := \N$,
\[
|u_a'| = 2^{-a^2}
\]
(the precise choice of these similarities does not matter as long as they satisfy the OSC). Let $A := \{0,1\}$, and let $f:A^*\cup A^\N \to \DS(\UU)$ be defined by the formula $f(\tau) := \HD(\Lambda_{A_\tau})$, where $A_\tau := \{a\in E : \tau_a = 1\}$.

Next we prove an analogue of Claim \ref{claim4points}, where the implied constant is now independent of the unperturbed limit set.

\begin{claim}
\label{claimbranch}
Fix $a_1,a_2 \in E$, and define $\w A^* := \{\omega \in A^* : \omega_{a_1} = \omega_{a_2} = 1\}$ and $\w A^\N := \{\omega \in A^\N : \omega_{a_1} = \omega_{a_2} = 1\}$. 
Then for all $\omega\in \w A^*$, we have
\[
f(\omega 1) = f(\omega 0) + \Theta(2^{-|\omega|^2 f(\omega)}),
\]
where the implied constants may depend on $a_1,a_2$.
\end{claim}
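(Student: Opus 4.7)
The plan is to recycle the Perron--Frobenius argument of Claim \ref{claim4points}, but to exploit the SIFS structure in order to extract multiplicative constants that depend only on $a_1, a_2$, not on $\omega$. First I would note that $A_{\omega 0} = A_\omega$ and $A_{\omega 1} = A_\omega \cup \{b\}$, where $b$ is the letter corresponding to the newly appended position, so $f(\omega 0) = f(\omega) =: \delta$. The critical simplification comes from Hutchinson's identity: the SIFS $(u_a)_{a\in A_\omega}$ at exponent $\delta$ satisfies $\sum_{a \in A_\omega} |u_a'|^\delta = 1$, so the constant function $\mathbf{1}$ is the Perron--Frobenius eigenfunction ($L\mathbf{1} = \mathbf{1}$). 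This yields $L'\mathbf{1} = (1 + |u_b'|^\delta)\mathbf{1}$ and hence $\rho(L') = 1 + |u_b'|^\delta$ \emph{exactly}, giving $P(A_\omega \cup \{b\}, \delta) = \log(1 + |u_b'|^\delta) \asymp |u_b'|^\delta$ with absolute constants.

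Next I would apply Bowen's formula $P(A_\omega \cup \{b\}, f(\omega 1)) = 0$ together with the mean value theorem to obtain
$f(\omega 1) - \delta = \frac{P(A_\omega \cup \{b\}, \delta)}{-P'(A_\omega \cup \{b\}, \xi)}$
for some $\xi \in (\delta, f(\omega 1))$. So the remaining task is to establish $-P'(A_\omega \cup \{b\}, s) \asymp 1$ uniformly in $\omega \in \w A^*$ and in $s$ over the relevant range, with constants depending only on $a_1, a_2$. For similarities one has the explicit closed form $-P'(F, s) = (\log 2) \cdot \frac{\sum_{a \in F} 2^{-a^2 s} \cdot a^2}{\sum_{a \in F} 2^{-a^2 s}}$.

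The uniformity will rest on two observations: (i) since $A_\omega \supseteq \{a_1, a_2\}$ we have $\delta = f(\omega) \geq \delta_0 := \HD(\Lambda_{\{a_1, a_2\}}) > 0$; and (ii) $s \leq \HD(\Lambda_E) \leq 1$. Observation (i) makes the series $\sum_{a=1}^\infty 2^{-a^2 s} \cdot a^2$ uniformly convergent for $s \geq \delta_0$, providing a uniform upper bound on the numerator; observation (ii) together with $a_1 \in A_\omega$ forces the denominator to be bounded below by $2^{-a_1^2}$. Similarly, the denominator is bounded above by $\sum_{a \geq 1} 2^{-a^2 \delta_0}$ and the absolute value of the numerator below by $2^{-a_1^2} \cdot a_1^2 \log 2$. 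All four bounds depend only on $a_1, a_2$, delivering $-P' \asymp 1$ uniformly.

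The main obstacle is precisely this uniform control of $-P'$ as $\omega$ ranges over the uncountable family $\w A^*$. The SIFS hypothesis is what makes it tractable: it collapses the Perron--Frobenius eigenfunction to $\mathbf{1}$, gives $P'$ in explicit series form, and---via the forced inclusion $\{a_1, a_2\} \subseteq A_\omega$---supplies the lower bound $\delta \geq \delta_0$ needed to tame the infinite alphabet.
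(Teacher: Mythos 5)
Your argument is correct and is essentially the paper's own proof: the paper simply writes down the two Moran equations for $A_{\omega 0}$ and $A_{\omega 1}$, subtracts, and applies the mean value theorem to $t\mapsto\sum_{a\in A_\omega}2^{-a^2 t}$, which is exactly your pressure computation with the transfer-operator scaffolding stripped away (your remark that $\mathbf{1}$ is the Perron--Frobenius eigenfunction is the same as invoking Moran's equation directly), and the decisive uniformity ingredient is identical in both -- the inclusion $\{a_1,a_2\}\subseteq A_\omega$ forces $\xi\geq\HD(\Lambda_{\{a_1,a_2\}})>0$, which bounds $\sum_a a^2 2^{-a^2\xi}$ above, while the single term for $a_1$ bounds it below. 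Note that, just like the paper's proof, what you actually establish is $f(\omega 1)-f(\omega 0)=\Theta(2^{-(|\omega|+1)^2 f(\omega)})$ rather than the exponent $|\omega|^2$ appearing in the claim's statement; this discrepancy is present in the original argument as well and is harmless for the application that follows.
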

\begin{proof}
Let $\omega\in \w A^*$. Then it follows from Bowen's formula \cite[Theorem 4.2.11]{MauldinUrbanski2} that
\[
\sum_{a : \omega_n = 1} 2^{-a^2 f(\omega0)} = 1
\]
and that 
\[
\sum_{a : \omega_n = 1} 2^{-a^2 f(\omega1)} + 2^{-(|\omega| + 1)^2 f(\omega1)} = 1
\]
Hence,
\begin{align*}
2^{-(|\omega| + 1)^2 f(\omega1)} 
&= \sum_{a : \omega_n = 1} 2^{-a^2 f(\omega0)} - \sum_{a : \omega_n = 1} 2^{-a^2 f(\omega1)}\\
&= \left( f(\omega1) - f(\omega0) \right) \cdot  \sum_{a : \omega_n = 1}  \ln(2) 2^{-a^2 \xi} a^2 
\end{align*}
where $\xi \in (f(\omega0), f(\omega1))$. It then follows that
\[
2^{-(|\omega| + 1)^2 f(\omega1)} \asymp f(\omega1) - f(\omega0)
\]
since
\[
0 < 2^{-a_1^2} a_1^2 + 2^{-a_2^2} a_2^2 \leq \sum_{a : \omega_n = 1} 2^{-a^2 \xi} a^2 \leq \sum_{a \in E} 2^{-a^2 s} a^2 < \infty
\]
where $s := \HD(\{ a_1,a_2\}) > 0$. This concludes the proof of Claim \ref{claimbranch}.
\end{proof}

It then follows from Claim \ref{claimbranch} that for $\omega,\tau\in \w A^*$, we have
\[
|f(\omega) - f(\tau)| = O(2^{-|\omega\wedge\tau|^2 f(\omega)}) = O(2^{-s|\omega\wedge\tau|^2}),
\]
where $s = \HD(\{ a_1,a_2\})$. This implies that the box dimension, and thus that Hausdorff dimension, of $f(\w A^\N)$ is zero. By countable stability of Hausdorff dimension, 
\[
\HD(\DS(\UU)) = f(A^\N) = 0.
\]
This concludes the proof of Theorem \ref{theoremHDzero}.

\draftnewpage
\section{Conjectures and future work}

Our investigations of topological and metric properties of the dimension spectra of various conformal iterated function systems led to the following conjectures. 

\begin{conjecture}
The only sets $K \subset [0,\infty)$ such that both $K$ and its mirror image $\sup(K) - K$ are dimension spectra of CIFSes are intervals, i.e. $K = [0,\lambda]$ for some $\lambda >0$.
\end{conjecture}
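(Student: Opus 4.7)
The plan is to exploit the structural asymmetry between the accumulation of a CIFS dimension spectrum near its supremum versus near $0$. Since $0\in\DS(\UU)$ for every CIFS, the hypothesis that $\lambda - K$ is a spectrum forces $0 \in \lambda - K$, and hence $\lambda := \sup(K) \in K$ is attained. Write $K = \DS(\UU)$ and $\lambda - K = \DS(\UU')$ on alphabets $E, E'$ with symbol derivatives $D_b := \|u_b'\|$ and $D'_{b'} := \|u_{b'}'\|$. Applying Claim \ref{claim4points} along a nested sequence of finite $F_n \uparrow E\setminus\{b\}$ with $\HD(F_n)\uparrow \HD(E\setminus\{b\})$ and passing to the limit yields $\lambda - \HD(E\setminus\{b\}) \asymp D_b^{\HD(E\setminus\{b\})}$ for $b$ of small derivative, and Bowen's formula gives the summability $\sum_{b\in E} D_b^\lambda \asymp 1$. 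So $K$ accumulates at $\lambda$ on an essentially \emph{polynomial} scale in the tail derivatives $D_b$; the same conclusion applies to $\lambda - K$ near its own top with the primed derivatives.

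By contrast, $K$ accumulates at $0$ on a \emph{logarithmic} scale: the smallest positive members come from pair-subsystems $\{a_i, a_j\}$ with $D_{a_i}, D_{a_j}\to 0$, and Bowen's formula on a pair of conformal maps gives $\HD(\{a_i,a_j\}) \asymp 1/\log(1/\max(D_{a_i}, D_{a_j}))$ (this is exactly the scale exploited in the proof of Theorem \ref{theoremHDzero}). The same logarithmic scale governs $\lambda - K$ near $0$ via the primed derivatives. Under the mirror $K \leftrightarrow \lambda - K$, top and bottom swap, so the polynomial ``top'' of $\lambda - K$ must coincide with the logarithmic ``bottom'' of $K$, and vice versa. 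The plan is to quantify this clash via gap-counting functions such as $N_K(\epsilon) := \#\{k\in K : k < \epsilon\}$ and a dual top-counting function $N^*_K(\epsilon) := \#\{k\in K : \lambda - k < \epsilon\}$, and to show that the pair $(N_K, N^*_K)$ cannot simultaneously satisfy the two scalings coming from $\UU$ and from $\UU'$ unless $K$ has no genuine gaps at $0$ or at $\lambda$. Finally, propagating the no-gap conclusion via sub-subsystems — for every finite $F\subset E$, $\DS(\UU_F)\subset K$ sits below $\HD(F)$ with the same polynomial top-asymptotic at $\HD(F)$ — fills $K$ densely around the countable set $\{\HD(F) : F\subset E \text{ finite}\}$, and combined with compactness and perfectness of $K$ forces $K = [0, \lambda]$.

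The main obstacle is to convert the qualitative mismatch ``polynomial vs logarithmic'' into a rigorous contradiction ruling out every non-interval $K$. A gap of $K$ near $0$ mirrors to a gap of $\lambda - K$ near $\lambda$ of the same size, and one must show no CIFS $\UU'$ can exhibit that particular gap at its top — a statement not immediate from Claim \ref{claim4points} alone, whose implied constants depend on the unperturbed finite subsystem and therefore degrade as one approaches $\lambda$. What seems to be needed is a uniform lower bound on the near-top density of CIFS spectra, sharp enough to preclude matching any prescribed logarithmic-scale gap pattern imported from the bottom of the mirror partner. We expect this will require a more delicate analysis of the pressure function of the full alphabet (e.g.\ bounds on $P'(E, s)$ as $s\searrow\lambda$) or of the joint distribution of the sequence $\{D_b^\lambda : b\in E\}$ in $(0,1]$, going beyond the single-symbol perturbation captured by Claim \ref{claim4points}.
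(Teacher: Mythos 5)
First, a point of orientation: the statement you are proving is posed in the paper as an open \emph{conjecture} in the ``Conjectures and future work'' section; the paper offers no proof of it, so there is no argument of the authors' to compare yours against, and a complete proof would be a genuinely new result rather than a reconstruction. Your proposal is explicitly a plan rather than a proof --- you yourself flag that the decisive step (a uniform lower bound on the near-top density of a CIFS spectrum, sharp enough to exclude a prescribed gap pattern) is missing --- so as it stands it cannot be accepted.

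Beyond the gap you acknowledge, the strategy has structural problems that would need to be repaired before the plan could work. (i) The entire mechanism compares accumulation rates of $K$ at $0$ and at $\lambda=\sup(K)$, so at best it rules out gaps \emph{adjacent to the endpoints}. A non-interval compact perfect set need not have such gaps: $K=[0,\lambda/3]\cup[2\lambda/3,\lambda]$ is mirror-symmetric, contains $0$, and is ``full'' near both endpoints, so no endpoint scale mismatch arises; your propagation step does not close this, because showing that $K$ accumulates at each $\HD(F)$ for finite $F\subseteq E$ only re-derives perfectness of $K$ (which CLU already proved) and says nothing about gaps in $[0,\lambda]\setminus K$. (ii) The ``polynomial top'' asymptotic $\lambda-\HD(E\setminus\{b\})\asymp D_b^{\lambda}$ is not a consequence of Claim \ref{claim4points}: the implied constants there depend on the finite unperturbed system $F$ and degrade as $F\uparrow E\setminus\{b\}$, and for infinite systems whose dimension is governed by the finiteness parameter of the pressure it can happen that $\HD(E\setminus\{b\})=\HD(E)$ for every $b$, so the left-hand side vanishes identically and the claimed asymptotic is false. (iii) The counting functions $N_K(\epsilon):=\#\{k\in K: k<\epsilon\}$ and $N_K^*(\epsilon)$ are infinite for every $\epsilon>0$ (since $K$ is perfect and contains $0$ and $\lambda$), so the quantitative comparison you intend to run with them is not yet well defined; you would need to count gaps, or subsystems realizing dimensions in a window, with explicit multiplicity control. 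Until these three issues are addressed --- in particular (i), which shows the approach cannot see interior gaps at all --- the proposal does not constitute progress toward the conjecture so much as a restatement of the heuristic (polynomial accumulation at the top versus logarithmic at the bottom) that presumably motivated the conjecture in the first place.
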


Let us recall the definition of local Hausdorff dimension in the setting of a metric space: 
\begin{definition}
\label{localdimension}
Let $X$ be a metric space and let $F \subset X$.
For all $x \in X$ we define the \textsf{\textbf{local Hausdorff dimension}} of $F$ at $x$ as
\[
\dim_x(F) := \inf\{ \HD(F \cap B(x,\epsilon)) : \epsilon > 0 \}.
\]
The monotonicity of the Hausdorff dimension implies that the infimum in the definition above is actually a limit as $\epsilon$ tends to zero, i.e. 
\[
\dim_x(F) = \lim_{\epsilon \to 0} \HD(F \cap B(x,\epsilon)).
\]
\end{definition}

\begin{conjecture}
Let $F \subset [0,\infty)$ be the dimension spectrum of a CIFS.
The map $x \mapsto \dim_x(F)$ restricted to $x \in F$ is a continuous, weakly decreasing (i.e. nonincreasing) function.
\end{conjecture}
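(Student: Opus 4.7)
To prove the conjecture, the plan is to reduce the local analysis of $F := \DS(\UU)$ near a point $x \in F$ to an explicit model governed by the contraction norms $D_b := \|u_b'\|$, then read off both claimed properties from the model. First, I would sharpen Claim \ref{claim4points} to a multi-symbol perturbation formula: for $A \subseteq E$ with $\HD(A) = x > 0$, and for disjoint sets $C_+ \subseteq E \setminus A$ and $C_- \subseteq A$ consisting of symbols with sufficiently small $D_b$, I claim a first-order expansion
\[
\HD\big((A \setminus C_-) \cup C_+\big) - \HD(A) \;\approx\; c_A \cdot \bigg( \sum_{b \in C_+} D_b^x - \sum_{b \in C_-} D_b^x \bigg),
\]
where $c_A > 0$ depends only on $A$ and the relative error tends to $0$ as $\sup_{C_\pm} D_b \to 0$. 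This comes from refining the transfer-operator calculation in the proof of Claim \ref{claim4points} to allow multiple simultaneous modifications, then dividing through by the pressure-derivative lower bound used there. Absolute convergence of the right-hand side for $x > 0$ follows from $\sum_b D_b^x < \infty$, a standard fact in CIFS pressure theory \cite{MauldinUrbanski2}; the case $x = 0$ (singleton subsystems) will be treated separately via upper semicontinuity.

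Next I would package this expansion into a bi-Lipschitz identification. Define the signed achievement set
\[
S_x := \bigg\{\sum_{b \in C_+} D_b^x - \sum_{b \in C_-} D_b^x \;:\; C_+, C_- \subseteq E \text{ disjoint}\bigg\} \subseteq \R,
\]
a bounded symmetric set containing $0$. The refined expansion should yield a bi-Lipschitz equivalence between $(F - x) \cap (-\epsilon, \epsilon)$ and $S_x \cap (-C\epsilon, C\epsilon)$ for sufficiently small $\epsilon$, giving $\dim_x(F) = \dim_0(S_x)$. The non-uniqueness of the representative $A$ for $x$ is harmless because the bi-Lipschitz germ of $F$ at $x$ is intrinsic to $x$.

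Now $\dim_0(S_x)$ is the local dimension at $0$ of the signed achievement set of the sequence $(D_b^x)_{b \in E}$, a classical object. As $x$ increases, each $D_b^x$ strictly decreases (since $D_b < 1$), so the sequence accumulates to $0$ more quickly and its achievement set shrinks: concretely, the map $D_b \mapsto D_b^x$ rescales the relevant counting function $N(\epsilon) := \#\{b : D_b \geq \epsilon\}$ in logarithmic coordinates by a factor of $1/x$. This yields both continuity and weak monotonicity of $\dim_0(S_x)$ in $x$, in both the ``interval'' regime (where $S_x$ contains an interval around $0$ and $\dim_0(S_x) = 1$) and the ``Cantor'' regime (where a classical formula expresses $\dim_0(S_x)$ in terms of the counting function's growth). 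Continuity at transitions between the two regimes, and at $x = 0$, follows by a squeeze argument invoking upper semicontinuity of local Hausdorff dimension.

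The main obstacle I anticipate is controlling higher-order error terms in the refined perturbation expansion when $C_\pm$ are large or infinite. Single-symbol modifications are handled cleanly by Claim \ref{claim4points}, but ensuring that the cumulative first-order linear formula remains accurate after arbitrarily many small modifications requires a careful spectral analysis of the transfer operator. A related subtlety arises from near-cancellation in the first-order term when $\sum_{C_+} D_b^x \approx \sum_{C_-} D_b^x$, which could weaken the bi-Lipschitz identification at degenerate configurations and may require a second-order analysis to handle. Resolving these challenges will likely require exploiting the real-analyticity and convexity of the pressure function, extending the transfer-operator bounds of \cite{MauldinUrbanski2} to bound higher-order corrections uniformly across all admissible $C_\pm$.
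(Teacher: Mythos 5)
You should first note that the paper does not prove this statement: it is posed explicitly as an open conjecture in the final section, so there is no proof of record to compare against, and your proposal is necessarily a research program rather than a verification. As written it contains a gap that is fatal to its central step. The bi-Lipschitz identification of $(F - x)\cap(-\epsilon,\epsilon)$ with $S_x\cap(-C\epsilon,C\epsilon)$ presumes that every subset $B\subseteq E$ with $\HD(\Lambda_B)$ within $\epsilon$ of $x$ has the form $(A\setminus C_-)\cup C_+$ for a single fixed representative $A$ with $\HD(\Lambda_A)=x$, and with $C_\pm$ consisting only of symbols of small contraction norm. Nothing forces this: two subsets of $E$ can have nearly equal (or equal) Hausdorff dimensions while differing in symbols of large norm, so your model set $S_x$ yields only the one-sided inequality $\dim_x(F)\geq\dim_0(S_x)$. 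The remark that the ``bi-Lipschitz germ of $F$ at $x$ is intrinsic to $x$'' is circular --- whether different representatives $A$ produce the same local structure is precisely what must be proved --- and the matching upper bound would require a covering argument over all of $2^E$, not over perturbations of one $A$.

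Beyond that, the two obstacles you flag yourself --- uniform control of higher-order corrections when $C_\pm$ is infinite, and near-cancellation between $\sum_{b\in C_+}D_b^x$ and $\sum_{b\in C_-}D_b^x$ --- are not technicalities but the substance of the problem: Claim \ref{claim4points} is proved in the paper only for adjoining a single symbol to a finite alphabet, its implied constant already depends on the unperturbed set $F$, and no uniform multi-symbol version is available. Your monotonicity heuristic (logarithmic rescaling of the counting function $N(\epsilon)$ by $1/x$) is sound as intuition and in fact recovers the $\min(1,c/x)$ profile that the authors themselves propose in Conjecture \ref{conjecturetypes}, which suggests you have identified the right mechanism; but turning it into a proof requires the two-sided local model, and that is exactly what is missing.
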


\begin{conjecture}
\label{conjecturetypes}
Let $F \subset [0,\infty)$ be the dimension spectrum of a CIFS.
Then one of the following three mutually exclusive scenarios holds:
\begin{itemize}
\item[Type I] $F$ is equal to the union of finitely many intervals.
\item[Type II] $F$ has zero Hausdorff dimension.
\item[Type III] The local Hausdorff dimension satisfies $\dim_x(F) = \min(1,c/x)$ for all $x \in F$, for some $0<c< \sup(F)$. I.e. the graph of the function $F\ni x\mapsto \dim_x(F)$ is a horizontal line followed by a hyperbola.
\end{itemize}
\end{conjecture}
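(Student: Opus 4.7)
The plan is to reduce the trichotomy to a statement about the asymptotic distribution of the derivative norms $D_a := \|u_a'\|$, with Claim \ref{claim4points} serving as the bridge between that distribution and the local geometry of $F := \DS(\UU)$. First I would fix $x \in F$ and, using that subsystem dimensions are dense in $F$ (by the Texan-style arguments of \cite{KessebohmerZhu} together with the CLU compactness-perfectness theorem), approximate $x$ by the dimension of a finite subsystem $F_0 \subset E$ with $\HD(F_0)$ close to $x$. For any $b \in E\setminus F_0$, Claim \ref{claim4points} gives $\HD(F_0\cup\{b\}) = \HD(F_0) + \Theta(D_b^{\HD(F_0)})$, so near $x$ the spectrum $F$ is approximated, up to multiplicative constants, by the scaled counting set $\{x + D_b^{x} : b \in E\}$. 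Consequently the local geometry of $F$ at $x$ is governed by the counting function $N(\epsilon) := \#\{a\in E : D_a \geq \epsilon\}$, and more precisely by the composite $N(\epsilon^{1/x})$.

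Next I would split the analysis according to the growth exponent
\begin{equation*}
\alpha := \limsup_{\epsilon \to 0^+} \frac{\log N(\epsilon)}{-\log \epsilon} \in [0, \infty].
\end{equation*}
When $\alpha = \infty$, the derivatives decay super-polynomially, and the covering argument in the proof of Theorem \ref{theoremHDzero} should upgrade to yield $\HD(F) = 0$, giving Type II. When $0 < \alpha < \infty$ and the derivative distribution is regularly varying, an upper covering estimate (Claim \ref{claim4points} applied to multi-symbol perturbations of length $\ell$, yielding $\asymp N(\epsilon^{1/x})^\ell$ perturbations of size $\asymp \ell \epsilon$) combined with a matching lower bound (realizing combinations of single-symbol jumps as genuine subsystem dimensions via perturbation of the Perron--Frobenius operator) should produce $\dim_x(F) = \min(1,\, 1/(\alpha x))$, which is exactly Type III with $c = 1/\alpha$. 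Finally, in the ``slowly-decaying'' regime, where single-symbol perturbations already saturate local dimension $1$ at every point of $F$ above some threshold, a compactness and monotonicity argument would show that the set of full-local-dimension points is a closed subinterval of $F$, and pushing this through a pressure-interpolation argument of Kesseb\"ohmer--Zhu type would force $F$ itself to be a finite union of intervals, yielding Type I. Mutual exclusivity then follows directly: Type II alone has $\dim_x(F)=0$ everywhere, Type III strictly fails to attain local dimension $1$ for $x > c$, and Type I has $\dim_x(F)\in\{0,1\}$ everywhere.

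The main obstacle is the rigidity underlying Type III: proving that the local dimension is \emph{exactly} $\min(1, c/x)$, rather than merely pinched between two such functions with different constants, requires that the derivative distribution $(D_a)$ be suitably regular on every scale. This is not automatic for an arbitrary CIFS, and without additional regularity one might worry about intermediate multifractal local-dimension profiles that would contradict the trichotomy as stated. A secondary obstacle appears in Type I: ruling out Cantor-like subspectra interrupting the intervals seems to demand a careful analysis of the pressure function interpolating between subsystems, beyond what is already available in \cite{KessebohmerZhu, CLU}. Indeed, it is conceivable that the conjecture as stated is slightly too optimistic, and that the correct formulation should either allow a more general monotone-decreasing functional form in Type III, or restrict attention to CIFSes satisfying some regular-variation hypothesis on $(D_a)_{a \in E}$; testing this on explicit families (for instance systems with $D_a \asymp a^{-p}(\log a)^{-q}$) would be a natural first step.
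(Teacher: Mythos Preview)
There is no proof in the paper to compare against: the statement is labeled and treated as a conjecture, appearing in the ``Conjectures and future work'' section with only a remark that each of the three types is realized by some SIFS. So your proposal cannot be assessed as matching or diverging from the paper's argument --- there is none.

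As a research outline your proposal has a concrete flaw in the case split. You define $\alpha = \limsup_{\epsilon\to 0^+} \log N(\epsilon)/(-\log\epsilon)$ and assert that $\alpha=\infty$ corresponds to super-polynomial decay of the $D_a$; this is backwards. Super-polynomial decay of $D_a$ means $N(\epsilon)$ grows \emph{sub}-polynomially, giving $\alpha=0$. More damagingly, the paper's own exemplars of all three types have exponentially decaying contraction ratios (Type~I: $D_a=2^{-a}$; Type~II: $D_a=2^{-a^2}$; Type~III: $D_a\in\{3^{-1},3^{-1},3^{-2},3^{-3},\ldots\}$), and for each of these $N(\epsilon)$ is only logarithmic in $1/\epsilon$, so $\alpha=0$ in every case. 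Your proposed invariant therefore does not separate the three types even on the motivating examples, and the formula $c=1/\alpha$ for Type~III cannot be correct. Any genuine attack would need a finer invariant of the sequence $(D_a)$ than its polynomial growth exponent.

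You are right to flag, in your final paragraph, that the exact form $\min(1,c/x)$ may be too rigid without a regularity hypothesis; the paper itself offers no evidence beyond the three examples, so this caution is well placed.
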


\begin{remark}
For each of the three scenarios in Conjecture \ref{conjecturetypes} there exists a set $K$ exemplifying the scenario, which can be realized as the dimension spectrum of a SIFS. Indeed, this observation led us to Conjecture \ref{conjecturetypes}. Theorem \ref{theoremHDzero} provides an example of Type II. 
We leave it as an exercise for the interested reader to verify that the dimension spectrum of any SIFS whose similarities have contraction ratios $1/2,1/4,1/8,\ldots$ is of Type I, and similarly that the spectrum of one whose similarities have contraction ratios $1/3,1/3,1/9,1/27,\ldots$ is of Type III.
\end{remark}

\begin{remark}
If Conjecture \ref{conjecturetypes} is true then for each $F \subset [0,\infty)$ that is a dimension spectrum of a CIFS, the local Hausdorff dimension satisfies $\dim_x(F) = \min(1,c/x)$ for all $x \in F$, for some $0\leq c \leq \sup(F)$. The cases when $c=\sup(F)$ and $c=0$ correspond to Types I and II, respectively.
\end{remark}

In general, it appears difficult to distinguish between sets that can be SIFS or CIFS dimension spectra and those that cannot. In particular, it would be interesting to understand when an SIFS dimension spectrum could be realized as that of a CIFS that is not an SIFS, and vice versa.

The study of finite SIFSes with overlaps has witnessed several breakthroughs in the last decade, \cite{Hochman3, Hochman4}. It would be interesting to know whether dimension spectra behave differently in the absence of the OSC. 
For instance, recall that CLU proved \cite[Theorem 1.2]{CLU} that the dimension spectrum of an infinite conformal iterated function system satisfying the open set condition is compact and perfect.
However, this theorem is false for some systems that satisfy all conditions of being a CIFS except for the OSC.
Indeed, take the family of maps $\UU = \{ u_a:\R\to\R\}_{a \in E}$ defined by $u_a(x) := (1/2)x + a$ for $a \in E := \Q\cap [0,1]$. 
Then for any $F \subseteq E$, the dimension of $\Lambda_F$ is either $0$ or $1$ depending on whether or not $\#(F) \geq 2$, and thus $\DS(\UU) = \{0,1\}$.

Beyond similarity and conformal IFSes, the dimension spectra of affine IFSes remain unanalyzed. It may be fruitful to first focus on \emph{infinitely generated versions} of certain well-studied classes of finitely generated affine or other non-conformal IFSes, see e.g., \cite{Baranski, ChenPesin, DasSimmons1, Reeve}. See \cite{Jurga} for some recent progress in this direction.

In a different direction, rather than focussing on solely the Hausdorff dimension spectra, the study of spectra of other fractal dimensions -- such as packing dimension, box dimension, and Assouad dimension -- also awaits investigation.

\draftnewpage
\bibliographystyle{amsplain}
\bibliography{bibliography}

\end{document}